\newcommand{\cF}{{\mathcal F}}
\newcommand{\Z}{{\mathbb Z}}
\newcommand{\N}{{\mathbb N}}
\newcommand{\R}{{\mathbb R}}
\newtheorem{theorem}{Theorem}[section]
\newtheorem{corollary}[theorem]{Corollary}
\newtheorem{lemma}[theorem]{Lemma}
\newtheorem{proposition}[theorem]{Proposition}
\newtheorem{remark}[theorem]{Remark}
\newcommand{\eps}{\varepsilon}
\begin{document}
\title{Volume constrained minimizers of the fractional perimeter with a potential energy}





\date{}

\maketitle 

\centerline{\scshape Annalisa Cesaroni}
\medskip
{\footnotesize
 \centerline{Department of Statistical Sciences}
   \centerline{University of Padova}
   \centerline{Via Cesare Battisti 141, 35121 Padova, Italy}
  \centerline{email: annalisa.cesaroni@unipd.it}
 } 
 
\medskip

\centerline{\scshape Matteo Novaga}
\medskip
{\footnotesize
 \centerline{ Department of Mathematics}
   \centerline{University of Pisa}
   \centerline{Largo Bruno Pontecorvo 5, 56127 Pisa, Italy  }
\centerline{email: novaga@dm.unipi.it}
}



\begin{abstract}
\noindent We consider volume-constrained  minimizers of the fractional perimeter with the addition of a 
potential energy in the form of a volume integral. Such minimizers are solutions of the
prescribed fractional curvature problem. 
We prove existence and regularity of minimizers under suitable assumptions on the potential 
energy, which cover the periodic case. 
In the small volume regime we show that minimizers are close to balls, with a quantitative estimate.
\end{abstract}



\section{Introduction}
Let $s\in (0,1)$ and let $E\subset\R^N$ be a measurable set, the fractional perimeter $P_s(E)$ of $E$ is defined as the squared $H^{s/2}$-seminorm of the characteristic function of $E$, i.e.
\begin{equation}\label{ps}
P_s(E)=\frac{1}{2} \int_{\R^N}\int_{\R^N} \frac{|\chi_E(x)-\chi_E(y)|^2}{|x-y|^{N+s}}dxdy= \int_E\int_{E^c} \frac{1}{|x-y|^{N+s}}dxdy.
\end{equation} 
This notion has been introduced  in \cite{v,crs} and has been widely studied in the last years (see \cite{fffmm,dinoruva} and references therein).  

It is well known that balls are the unique minimizers of the fractional perimeter  among sets with the same volume. Indeed,  the following  fractional  isoperimetric inequality holds for sets of finite volume  (see \cite{crs,fffmm}):  
\begin{equation}\label{isoperim} P_s(E)\geq \frac{P_s(B)}{|B|^{\frac{N-s}{N}}} |E|^{\frac{N-s}{N}}, 
\end{equation} where   $B$ is the  ball of radius $1$, and equality holds if and only if $E$ is a ball. 
The  isoperimetric inequality \eqref{isoperim} can also be localized in bounded sets with Lipschitz boundary (see \cite[Lemma 2.5]{dinoruva}).

In this paper we are interested in existence and properties of minimizers of 
the following isoperimetric problem
\begin{equation}\label{iso}
\min_{|E|=m} \mathcal{F}(E)=\min_{|E|=m} \left( P_s(E)-\int_E g(x)dx\right).\end{equation} In particular we will 
provide regularity properties of minimizers under the assumption that  $g:\R^N\to\R$ is    locally Lipschitz continuous and bounded from above, see Corollary \ref{regcor}, whereas the existence of a solution of the isoperimetric
problem is obtained  for $g$ periodic,  see Theorem \ref{teoex}, or $g$ coercive, that is \begin{equation} \label{coercive}\lim_{|x|\to +\infty} g(x)=-\infty,\end{equation} 
see Proposition \ref{teocoe}.

Our main result is the following.

\begin{theorem}\label{mainresult} Assume that $g$ is locally Lipschitz and either coercive 
 or $\Z^N$-periodic. 
Then for any $s\in (0,1)$ and $m>0$ there exists a bounded minimizer $E$ of \eqref{iso}. 
Moreover, $\partial E$ is of class $C^{2,\alpha}$ for any $\alpha<s$ outside of a closed 
singular set $S$ of Hausdorff dimension at most $N-3$. 
\end{theorem}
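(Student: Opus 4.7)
The plan is to prove the theorem in three stages: existence of a minimizer, boundedness, and regularity, using the direct method for the variational part and leveraging existing regularity theory for almost-minimizers of the fractional perimeter.

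\textbf{Existence.} The idea is the direct method, with a concentration argument to recover compactness. Fix a minimizing sequence $\{E_n\}$ with $|E_n|=m$ and $\mathcal{F}(E_n)\to \inf$. Testing against a fixed ball of volume $m$ gives a uniform upper bound on $\mathcal{F}(E_n)$; combined with the fractional isoperimetric inequality \eqref{isoperim} and the fact that $g$ is bounded above, this yields $P_s(E_n)\leq C$ and $\int_{E_n}g \geq -C$. In the coercive case \eqref{coercive}, the latter bound forces the $L^1$ mass of $E_n$ inside $\{g\leq -M\}$ to vanish as $M\to\infty$, so after a truncation step the sequence is effectively confined to a fixed ball $B_R$. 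Compactness of the embedding $H^{s/2}(B_R)\hookrightarrow L^2(B_R)$ then gives $\chi_{E_n}\to \chi_E$ in $L^1$ along a subsequence, and lower semicontinuity of $P_s$ together with dominated convergence for $\int_E g$ yield the minimality of $E$. In the $\Z^N$-periodic case the obstruction is splitting: by periodicity we may translate each $E_n$ so that at least a fixed positive fraction of its mass lies in $[0,1]^N$. The plan is then to rule out dichotomy via a concentration--compactness alternative. If mass were to split into two pieces at diverging distance $R_n\to\infty$, the nonlocal nature of $P_s$ gives a strictly positive interaction term that one can recover by replacing the two far-apart components with a single translated configuration (using periodicity of $g$), contradicting minimality up to a vanishing error.

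\textbf{Boundedness.} Once a minimizer exists, I would show it is bounded by proving the standard uniform density estimates $|E\cap B_r(x)|\geq c r^N$ and $|E^c \cap B_r(x)|\geq c r^N$ for $x\in \partial E$ and $r$ small. These follow by comparing $E$ with $E\setminus B_r(x)$ or $E\cup B_r(x)$ and using that the potential term contributes only $O(r^N)$ against the isoperimetric gain. In the periodic case, this implies that $E$ has only finitely many connected components per period cell, and if one component were arbitrarily far from the bulk of the mass, one could delete it and move the missing volume to the main piece, strictly lowering $\mathcal{F}$ for the same reason as in the splitting argument.

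\textbf{Regularity.} The key observation is that any minimizer of \eqref{iso} is a $\Lambda$-minimizer (or \emph{almost minimizer}) of $P_s$ in the sense that
\begin{equation*}
P_s(E)\leq P_s(F)+\Lambda\,|E\triangle F|
\end{equation*}
whenever $E\triangle F\subset B_r(x)$, with $\Lambda$ depending on $\|g\|_{L^\infty}$ and the Lagrange multiplier induced by the volume constraint. By the regularity theory for almost-minimizers of $P_s$ (Caffarelli--Roquejoffre--Savin, Figalli--Valdinoci), $\partial E$ is $C^{1,\alpha}$ outside a closed singular set, and the Hausdorff dimension bound $\dim_{\mathcal H} S \leq N-3$ follows from the Savin--Valdinoci improvement. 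On the regular part, the Euler--Lagrange equation reads $H_s[E](x)=g(x)+\lambda$, where $H_s$ denotes the fractional mean curvature; since $g$ is locally Lipschitz, Schauder-type estimates for the nonlocal curvature operator bootstrap $C^{1,\alpha}$ regularity to $C^{2,\alpha}$ for every $\alpha<s$.

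The main obstacle will be the periodic existence, specifically the no-dichotomy step: ruling out that minimizing sequences split into components escaping to infinity requires a quantitative use of the nonlocal interaction in $P_s$ and a careful recombination argument exploiting the $\Z^N$-invariance of $g$. The regularity and coercive existence steps, by contrast, follow rather directly from established techniques.
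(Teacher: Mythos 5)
Your overall architecture matches the paper's: a concentration--compactness argument for periodic existence, tightness plus compact embedding for coercive existence, and regularity via almost-minimality combined with Caputo--Guillen, Savin--Valdinoci and Barrios--Figalli--Valdinoci. That said, two steps in your sketch are genuine gaps rather than routine fill-ins.

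\textbf{Boundedness.} Comparing $E$ with $E\setminus B_r(x)$ or $E\cup B_r(x)$ violates the volume constraint $|E|=m$, so the density estimates you invoke cannot be obtained ``directly'' from minimality of $E$ in the constrained problem. One needs a device for adjusting the volume while controlling the change in $P_s$; the paper does this via a nonlocal Almgren-type lemma (Lemma~\ref{lemmatranslation}), and then derives boundedness not from density estimates but from an integro-differential inequality for $f(r)=|E\setminus B_r|$ (Proposition~\ref{propositioncompact}), showing $f\equiv 0$ for $r$ large. Your alternative path --- density estimates implying finitely many components per cell and then deleting a far-away component --- is not only handwavy (why finitely many? how is the strict energy decrease quantified against the nonlocal interaction you are losing by deletion?), but it also invokes periodicity, whereas the paper proves boundedness of minimizers for any locally Lipschitz $g$ bounded above, without using periodicity or coercivity, and this generality is then used inside the existence proof to bound each cluster $G_i$.

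\textbf{Lagrange multiplier / almost-minimality.} You state that minimizers are $\Lambda$-minimizers with $\Lambda$ depending on ``the Lagrange multiplier induced by the volume constraint,'' but the existence of such a multiplier is not automatic and is precisely what Lemma~\ref{lemmaunc} establishes: a constrained minimizer is also a minimizer of the penalized functional $P_s(F)-\int_F g+\mu\,||F|-m|$ in $B_R$ for all $\mu\ge\mu_0$. The paper proves this by a contradiction/rescaling argument that uses the boundedness from Proposition~\ref{propositioncompact} and the scaling $P_s(\lambda F)=\lambda^{N-s}P_s(F)$. Without this lemma you cannot pass from the constrained problem to the inequality $P_s(E)\le P_s(F)+\Lambda|E\triangle F|$, and the appeal to the regularity theory has no rigorous entry point. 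The rest of your regularity bootstrap (using local Lipschitz regularity of $g$ to go from $C^{1,\alpha}$ to $C^{2,\alpha}$ via the Schauder-type estimates of Barrios--Figalli--Valdinoci, and the dimension bound from Savin--Valdinoci) is essentially the paper's argument once Lemma~\ref{lemmaunc} is in place.

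Your periodic existence sketch is in the right spirit, but the paper organizes the argument differently and more carefully: rather than a two-piece dichotomy alternative, it partitions $\R^N$ into unit cubes, controls the mass tail via the Goldman--Novaga lemma (Lemma~\ref{lemmagn}), extracts limit clusters $G_i$, proves lower semicontinuity of $P_s$ and of the potential term along the cluster decomposition, and finally uses strict subadditivity of $P_s$ on disjoint sets together with $\Z^N$-periodicity of $g$ to glue any two clusters with positive mass and obtain a contradiction. This handles (potentially infinitely many) escaping pieces, not just two, which your dichotomy sketch does not address.
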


Existence of such minimizers is related  to the problem of finding compact solutions to the geometric equation
\begin{equation}\label{curv} H_s(x)=g(x),\end{equation}
where $H_s$ denotes the $s$-mean curvature at a point $x\in\partial  E $ (see \cite{crs,av})
, that is,  \[H_s(x)= \frac{1}{\omega_{N-2}} \int_{\R^N}\frac{\chi_E(y)-\chi_{E^c}(y)}{|x-y|^{N+s}}dy.\]
Indeed if $E$ is a critical point of the functional \begin{equation}\label{unc} P_s(E)-\int_E g(x)dx, \end{equation} and $\partial E$ is of class 
$C^{1,\alpha}$ for some $\alpha>s$, then it is easy to prove that 
 $E$ solves the  prescribed fractional curvature problem \eqref{curv}. 
Note that in general there is no existence for minimizers of the problem \eqref{unc}, due to the lack of compactness. 

As a corollary of our main result, we get that if  $E$ is a minimizer of \eqref{iso}, then there exists a constant $\mu_m$, depending on $m$, 
such that \[H_s(x)=g(x)+\mu_m\] for $x\in\partial E\setminus S$, where $S$ is the singular 
set in Theorem \ref{mainresult}. 

We will also show in Proposition \ref{smallvolume} that, 
in the small volume regime, the contribution of the volume term $\int_E g(x)dx$ becomes irrelevant, 
and the minimizers converge, after appropriate rescalings, to a ball. 
Note that if $g$ is close to a constant, it is known that solutions to \eqref{curv}
are necessarily compact and close to balls in the Hausdorff distance (see \cite{cfmn}).

\section{Notation and basic estimates} 
Given a set $E\subset \R^N$, we denote as $E^c$ its complement, that is, 
$E^c=\R^N\setminus E$. 
We denote by $B_r$ the ball of center $0$ and radius $r$,  whereas $B(x,r)$ is the ball centered at $x$ and with radius $r$. We also let $\omega_N=|B_1|$. 

Given $E,F$ two sets in $\R^N$,  the symmetric difference of $E$ and $F$ is defined as usual as  $E\Delta F=(E\setminus F)\cup (F\setminus E)$. 


We recall   the following computation, that will be useful in the sequel (see \cite[Lemma 2.1]{dinoruva}).  Let $E=E_1\cup E_2$  be a subset of $\R^N$ with $|E_1\cap E_2|=0$, then 
 \begin{equation}\label{psunion}
P_s(E)=P_s(E_1)+P_s(E_2)-2\int_{E_1}\int_{E_2} \frac{1}{|x-y|^{N+s}}dxdy.
\end{equation} 
It is possible to define the nonlocal perimeter of $E$ in a bounded set $\Omega$
as follows: 
\begin{equation}\label{psomega}
P_s(E, \Omega)=  \int_{\R^N\setminus E}\int_{E\cap\Omega} \frac{1}{|x-y|^{N+s}}dxdy +\int_{\Omega\setminus E}\int_{E\setminus\Omega} \frac{1}{|x-y|^{N+s}}dxdy.
\end{equation}
Finally we recall the following formula (see \cite[Lemma 2.4]{dinoruva}). 
Given two disjoint bounded open sets $\Omega_1,\Omega_2$, then there holds 
\begin{equation}\label{psunionlocal}
P_s(E, \Omega_1)+P_s(E,\Omega_2)=P_s(E, \Omega_1\cup \Omega_2)+2\int_{\Omega_1}\int_{\Omega_2} \frac{1}{|x-y|^{N+s}}dxdy.
\end{equation}

\section{Regularity  of minimizers}\label{sectioncompactness}
In this section we shall assume that \begin{equation}\label{g1} \text{ $g$ is locally Lipschitz continuous and 
bounded from above}\end{equation}  and we will prove regularity of minimizers. 


We start with 
 a nonlocal version of the so-called Almgren's Lemma (see \cite[Lemma 2.3]{fm}). 
\begin{lemma}\label{lemmatranslation} 
Let $s\in (0,1)$ and let $E\subset\R^N$ be a measurable set with $P_s(E)<+\infty$.
Let $x_0\in \R^n$ and $r>0$ be  such that 
\begin{equation}\label{coll}
|B(x_0,r)\cap E|>0 \qquad \text{and}  \qquad |B(x_0,r)\cap E^c|>0\,.
\end{equation} 
Then there exist positive constants $k_0,\,C$, depending on $E$, such that for any 
$k\in (-k_0,k_0)$ there exists a measurable set $F$  with $P_s(F)<+\infty$,
satisfying the following properties
\begin{enumerate}
\item $E\Delta F\subset\subset B(x_0,r)$, 
\item $|F|-|E|=k$, 
\item $|P_s(E)-P_s(F)|\leq C |k|$.
\end{enumerate}  
\end{lemma}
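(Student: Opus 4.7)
The strategy is to adapt Almgren's classical argument to the nonlocal setting by constructing $F$ as the image of $E$ under the flow of a compactly supported smooth vector field chosen to change volume at first order.

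First I would produce $X \in C_c^\infty(B(x_0,r);\R^N)$ with $\int_E \mathrm{div}(X)\,dx = 1$. The existence of some $X$ with $\int_E \mathrm{div}(X)\,dx \neq 0$ is forced by \eqref{coll}: otherwise, testing against $X = \phi e_i$ for arbitrary $\phi \in C_c^\infty(B(x_0,r))$ would give $\partial_i \chi_E = 0$ in $\mathcal{D}'(B(x_0,r))$ for every $i$, making $\chi_E$ constant a.e.\ in $B(x_0,r)$ and contradicting \eqref{coll}. A rescaling gives the desired normalisation.

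Next let $\Phi_t$ denote the flow of $X$; this is a family of smooth diffeomorphisms of $\R^N$, equal to the identity outside $B(x_0,r)$, and satisfying the uniform bi-Lipschitz bounds $(1+C|t|)^{-1}|u-v| \le |\Phi_t(u)-\Phi_t(v)| \le (1+C|t|)|u-v|$ together with $\bigl||\det D\Phi_t|-1\bigr| \le C|t|$ for $|t|$ sufficiently small. The function $V(t) := |\Phi_t(E)| = \int_E |\det D\Phi_t(u)|\,du$ is then $C^1$ near $0$ with $V'(0) = \int_E \mathrm{div}(X)\,du = 1$, so the inverse function theorem furnishes $k_0 > 0$ and a $C^1$ map $k \mapsto t(k)$ with $V(t(k)) = |E|+k$ and $|t(k)| \le C|k|$ for $|k|<k_0$. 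Setting $F := \Phi_{t(k)}(E)$ immediately gives $|F|-|E| = k$ and, for $k_0$ small enough, $E \Delta F \subset\subset B(x_0,r)$, establishing (1) and (2).

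For (3), the change of variables $x = \Phi_{t(k)}(u)$, $y = \Phi_{t(k)}(v)$ in the double integral defining $P_s(F)$ yields
\[
P_s(F) - P_s(E) = \frac{1}{2}\iint \frac{|\chi_E(u)-\chi_E(v)|^2}{|u-v|^{N+s}}\,\bigl(J_{t(k)}(u,v)-1\bigr)\,du\,dv,
\]
where $J_t(u,v) := |u-v|^{N+s}|\Phi_t(u)-\Phi_t(v)|^{-(N+s)}|\det D\Phi_t(u)||\det D\Phi_t(v)|$. The bi-Lipschitz bounds give $|J_{t(k)}-1| \le C|t(k)|$ globally, while $J_{t(k)} \equiv 1$ whenever both $u$ and $v$ lie outside $B(x_0,r)$, so by Fubini and symmetry
\[
|P_s(F) - P_s(E)| \le C|t(k)| \int_{B(x_0,r)}\!\int_{\R^N}\frac{|\chi_E(u)-\chi_E(v)|^2}{|u-v|^{N+s}}\,dv\,du \le C'|k|\,P_s(E),
\]
which proves (3) since $P_s(E)<\infty$. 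The main obstacle is precisely this last estimate: because $P_s$ is nonlocal, the difference $P_s(F)-P_s(E)$ is not localised in $B(x_0,r)$ (unlike the situation for the classical perimeter), so one genuinely needs the \emph{global} bi-Lipschitz control of $\Phi_t$ to handle pairs $(u,v)$ with only one endpoint in $B(x_0,r)$.
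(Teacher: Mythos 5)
Your proof is correct and follows essentially the same approach as the paper: produce a compactly supported vector field whose divergence has nonzero integral over $E$, deform $E$ by the associated one-parameter family of diffeomorphisms, calibrate the parameter $t$ to prescribe the volume, and control the change in $P_s$ via a change of variables together with uniform bi-Lipschitz and Jacobian bounds. The only (inessential) differences are that the paper uses the affine perturbation $\Phi_t(x)=x+tT(x)$ rather than the flow of $X$, and justifies the existence of the vector field by observing that its absence would force $P(E,B(x_0,r))=0$ and invoking the relative isoperimetric inequality, whereas your distributional argument ($\nabla\chi_E=0$ in $\mathcal D'(B(x_0,r))$ implies $\chi_E$ constant there) is a slightly more elementary way to reach the same contradiction.
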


\begin{proof} 

Let $T\in C^1_c(B(x_0, r), \R^N)$  be such that 
\[
M:=\int_E \text{ div }T(x)\,dx >0.
\]
Notice that such a vector field $T$ necessarily exists since otherwise we would have
\[
P(E,B(x_0, r)) = \sup \left\{ \int_E \text{ div }T(x)\,dx:\ T\in C^1_c(B(x_0, r), \R^N),\ \|T\|_{\infty}\leq 1\right\} = 0\,,
\]
which, by the relative isoperimetric inequality, would contradict \eqref{coll}.


For $t\in (-1,1)$, we define the maps $\Phi_t(x)=x+ t T(x)$. It is easy to see that $\Phi_t$ is a diffeomorphism of $\R^N$ for $t$ sufficiently small, moreover the Jacobian of  $\Phi_t$ is given by  $J\Phi_t(x)=1+t \text{ div }T(x)+o(t)$. 

By construction $E\Delta \Phi_t(E)\subset \subset B(x_0,r)$, moreover
\[|
\Phi_t(E)|=\int_E (1+t \text{ div }T(x)+o(t)) dx
= |E| + Mt +o(t).  
\] 
For $k$ sufficiently small, we then let $F:=\Phi_{t(k)}(E)$ where 
$t(k)=k/M + o(k)$ is such that $|F|=|E|+k$, so that Properties 1 and 2 are verified.

We now compute 
\begin{eqnarray*}P_s(\Phi_t(E))&=&\int_E\int_{E^c} \frac{1+t \text{div}T(x)+t \text{div}T(x)+o(t)}{|\Phi_t(x)-\Phi_t(y)|^{N+s}}dxdy
\\ 
&=&\int_E\int_{E^c} \frac{1+t \text{div}T(x)+t \text{div}T(x)+o(t)}{|x-y+ t(T(x)-T(y))|^{N+s}}dxdy.
 \end{eqnarray*}
Using the regularity of $T$, we get that there exists a constant $C$ (depending on $T$) such that 
\[(1-C|t|)^{N+s}|x-y|^{N+s}\leq |x-y+ t(T(x)-T(y))|^{N+s}\leq (1+C|t|)^{N+s}|x-y|^{N+s}.\]
 Substituting this estimate in the expression for $P_s(\Phi_t(E))$ above, we obtain that
\[ P_s(E) (1-C|t|)\leq P_s(\Phi_t(E)) \leq P_s(E) (1+C|t|) \] where $C$ depends on $T, N, s$. 
This shows that the set $F$ also satisfies Property 3, and the proof is concluded.

\end{proof}
Using this lemma we get boundedness of minimizers. 
 
\begin{proposition}\label{propositioncompact} Let \eqref{g1} hold. 
Then, every  minimizer $E$ of \eqref{iso} is bounded.  
In particular, there exists $\overline R$, depending on $E$, such that 
$E\subseteq B_{\overline R}$, up to a suitable translation.
\end{proposition}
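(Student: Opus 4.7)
The plan is to argue by contradiction using Lemma \ref{lemmatranslation} to build a volume-preserving competitor with support in a large ball. Suppose $E$ is unbounded, so that $m(R):=|E\setminus B_R|>0$ for all $R>0$, and note $m(R)\to 0$ as $R\to\infty$ since $|E|=m<\infty$. After translating, fix $x_0, r_0$ so that \eqref{coll} holds at $B(x_0,r_0)$, and a radius $R_0$ with $B(x_0,r_0)\subset B_{R_0}$; Lemma \ref{lemmatranslation} then provides constants $k_0, C>0$ depending on $E$.

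For $R>R_0$ with $m(R)<k_0$, apply Lemma \ref{lemmatranslation} with $k=m(R)$ to produce a set $F$ satisfying $|F|=m+m(R)$, $F\Delta E\subset B(x_0,r_0)$, $|P_s(F)-P_s(E)|\leq C\,m(R)$, and $|F\Delta E|\leq C\,m(R)$ (this last bound is extracted from the flow $\Phi_t$ used in the proof of the lemma). Since $F$ and $E$ agree on $B_R\setminus B_{R_0}$, my competitor
\[
\tilde E := F\cap B_R
\]
is admissible: $|\tilde E|=|F|-|E\cap B_R^c|=m$, and moreover $|\tilde E\Delta(E\cap B_R)|\leq C\,m(R)$.

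Using \eqref{psunion} on the decompositions $F=\tilde E\cup(E\setminus B_R)$ and $E=(E\cap B_R)\cup(E\setminus B_R)$, and writing $I(A,B):=\int_A\int_B|x-y|^{-N-s}\,dx\,dy$, the change in fractional perimeter is
\[
P_s(\tilde E)-P_s(E)=\bigl(P_s(F)-P_s(E)\bigr)-P_s(E\setminus B_R)+2\,I(\tilde E,E\setminus B_R).
\]
The Almgren bound dominates the first summand by $C\,m(R)$ and the fractional isoperimetric inequality \eqref{isoperim} gives $P_s(E\setminus B_R)\geq c_N\,m(R)^{(N-s)/N}$. On the potential side, the global bound $g\leq M$ and the local boundedness of $g$ on $B_{R_0}$, combined with $|\tilde E\Delta(E\cap B_R)|\leq C\,m(R)$, yield $\bigl|\int_{\tilde E}g-\int_E g\bigr|\leq C'\,m(R)$. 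Minimality $\mathcal{F}(E)\leq\mathcal{F}(\tilde E)$ therefore reduces to
\[
c_N\,m(R)^{(N-s)/N}\;\leq\;(C+C')\,m(R)+2\,I(\tilde E,E\setminus B_R).
\]
Since $(N-s)/N<1$, the linear term on the right is negligible compared to the left as $m(R)\to 0$, so the contradiction reduces to showing that $I(\tilde E,E\setminus B_R)$ is also small relative to $m(R)^{(N-s)/N}$ along a suitable sequence of radii.

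Controlling this interaction term is the main obstacle. Splitting $\tilde E=(F\cap B_{R_0})\cup\bigl(E\cap(B_R\setminus B_{R_0})\bigr)$, the inner piece sits at distance at least $R-R_0$ from $E\setminus B_R$, contributing only $O(R^{-N-s}m(R))=o(m(R)^{(N-s)/N})$. The annular piece is bounded by $I(E\cap B_R,E\setminus B_R)$, which I would control through a good-radius argument: Fubini together with the elementary estimate $||x|-|y||\leq|x-y|$ gives
\[
\int_0^\infty I(E\cap B_R,E\setminus B_R)\,dR=\frac{1}{2}\int_E\int_E\frac{||x|-|y||}{|x-y|^{N+s}}\,dx\,dy\leq\frac{1}{2}\int_E\int_E\frac{dx\,dy}{|x-y|^{N+s-1}},
\]
and the last integral is finite because $s<1$ and $|E|<\infty$ (apply spherical rearrangement of the inner integral to get the bound $C\,m^{1+(1-s)/N}$). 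This integrability lets me extract a sequence $R_n\to\infty$, chosen by averaging inside shrinking windows where $m(R)$ remains comparable to $m(R_n)$, along which $I(E\cap B_{R_n},E\setminus B_{R_n})$ is dominated by a small fraction of $m(R_n)^{(N-s)/N}$. This closes the contradiction and proves $E$ is bounded; translating, $E\subseteq B_{\bar R}$ for some $\bar R$ depending on $E$.
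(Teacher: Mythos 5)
Your setup is the same as the paper's: you use the nonlocal Almgren Lemma~\ref{lemmatranslation} to build a volume-fixing competitor $\tilde E = F \cap B_R$, decompose via~\eqref{psunion}, and invoke the isoperimetric inequality~\eqref{isoperim} to get a lower bound $c_N\,m(R)^{(N-s)/N}$ on $P_s(E\setminus B_R)$. Up to the interaction term, the two arguments coincide. The difference — and the gap — is in how you propose to kill the term $I(\tilde E, E\setminus B_R)$.

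Your averaging/good-radius argument does not close. From minimality you only ever obtain, for each large $R$,
\[
c_N\,m(R)^{(N-s)/N}\ \leq\ (C+C')\,m(R) + 2\,I(E\cap B_R,\, E\setminus B_R),
\]
which, after absorbing the linear term, says $I(E\cap B_R, E\setminus B_R)\gtrsim m(R)^{(N-s)/N}$ for $R$ large. Your Fubini identity correctly shows $\int_0^\infty I(E\cap B_R, E\setminus B_R)\,dR<\infty$; combining the two gives only $\int^\infty m(R)^{(N-s)/N}\,dR<\infty$. This is \emph{not} a contradiction with $m(R)>0$ for all $R$: for instance $m(R)\sim R^{-\alpha}$ with $\alpha>N/(N-s)$ is compatible with both. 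Likewise, a decay like $m(R)\sim e^{-R}$ allows $I(E\cap B_R, E\setminus B_R)\gtrsim m(R)^{(N-s)/N}\sim e^{-R(N-s)/N}$ for every $R$ while $\int I\,dR<\infty$, so no sequence of ``good radii'' along which the interaction is $o(m(R)^{(N-s)/N})$ need exist. Averaging inside windows does not help either, because the window length you can afford (the one in which $m$ stays comparable) may be too short relative to the local size of $\int I$.

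What is actually needed is the stronger pointwise bound used in the paper: dominating the inner integral over $B_R$ by an integral over $B(y, |y|-R)^c$ gives
\[
I(E\cap B_R, E\setminus B_R)\ \leq\ \frac{N\omega_N}{s}\int_{E\setminus B_R}\frac{dy}{(|y|-R)^s}\ =\ -\frac{N\omega_N}{s}\int_R^\infty\frac{f'(t)}{(t-R)^s}\,dt,
\]
where $f(r)=|E\setminus B_r|$. Feeding this into the minimality inequality, integrating in $R$, and rearranging yields the integro-differential inequality~\eqref{ode3}, $c\int_R^\infty f^{(N-s)/N}\leq f(R)$, whose sublinear exponent $(N-s)/N<1$ forces finite-time extinction of $f$ (a Gronwall-type separation-of-variables argument, as in \cite[Lemma 4.1]{dinoruva}). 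This cumulative ODE mechanism, rather than the existence of a single good radius, is the essential ingredient your proposal is missing.
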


\begin{proof}
Let $E$ be a minimizer of \eqref{iso}. For $r\ge 0$ we
define \[f(r)= |E\setminus B_r|.\] Then $f(r)$ is a nonincreasing function and  by the coarea formula,
we have
\[f'(r)=-P(E\cap B_r).\] 

We claim that there exists $\overline{R}$,  such that $f(r)=0$ for  $r\geq\overline{R}$. 
Let us assume by contradiction that $f(r)>0$ for any $r>0$.  Without loss of generality we can also assume that 
$E\cap B_1\not =\emptyset$ and $ E^c\cap B_1\not =\emptyset$. Moreover, we fix $R_0\ge 1$ such that $f(r)< k_0$ for any $r\geq R_0$, where $k_0$ is as in Lemma \ref{lemmatranslation}.  Then by Lemma \ref{lemmatranslation} for any $r\geq R_0$ there exists a set $F$  such that: 
\begin{enumerate} 
\item $F\Delta E\subset\subset B_1\subseteq B_r$,\item $|F|=|E|+f(r)$, 
\item $|P_s(E)-P_s(F)|\leq C f(r)$.  
\end{enumerate}
Let $G=F\cap B_r$. By the first two properties in Lemma \ref{lemmatranslation}, 
we have that $|G|=|E|$. 
Therefore, by minimality of $E$ and recalling \eqref{psunion}, we get 
\begin{eqnarray}\label{comp}
&& P_s(E)-\int_E g(x)dx<  P_s(G)-\int_G g(x)dx \\ \nonumber 
&& =P_s (F)-P_s(F\setminus B_r)+2 \int_{F\setminus B_r}\int_{F\cap B_r} \frac{1}{|x-y|^{N+s}}dxdy -\int_{F\cap B_r} g(x)dx  .\end{eqnarray} 

By Property 3  in Lemma \ref{lemmatranslation} we get that \begin{equation}\label{uno}P_s(F)\leq P_s(E)+C f(r).\end{equation} 
 
Notice that by  the construction in Lemma \ref{lemmatranslation}, using the locally Lipschitz regularity of $g$, we have
also 
\begin{eqnarray*}
&&\left|\int_{F\cap B_r} g(x)dx
- \int_{E\cap B_r} g(x)dx \right| =\left|\int_{F\cap B_1} g(x)dx
- \int_{E\cap B_1} g(x)dx \right|
\\
&& = 
\left|\int_{E\cap B_1} \big(g(x+t(f(r)))T(x))\,J\Phi_{t(f(r))}(x)-g(x)\big)dx\right| 
\\
\\
&&\leq  (K_g(1) \|T\|_\infty +\|\text{div}T\|_\infty \|g\|_{L^\infty(B_1)}) |E\cap B_1| t(f(r))+ o(f(r)),
\end{eqnarray*}
where $K_g(1)$ is the Lipschitz constant of $g$ in $B_1$.
 
So,  
\begin{equation}\label{due}
- \int_{F\cap B_r} g(x)dx \le  -\int_{E\cap B_r} g(x)dx + C f(r)\leq -\int_{E} g(x)dx +(C+\sup g) f(r).\end{equation}
  
Using the coarea formula and recalling that $E\setminus B_r=F\setminus B_r$, 
we get
\begin{eqnarray}\nonumber
\int_{F\setminus B_r}\int_{F\cap B_r} \frac{1}{|x-y|^{N+s}}dxdy    &\leq &  \int_{E\setminus B_r}\int_{  B_r} \frac{1}{|x-y|^{N+s}}dxdy \\ \label{tre}
\leq \int_{E\setminus B_r}\int_{B^c(y, |y|-r)} \frac{1}{|x-y|^{N+s}}dxdy & =&  \frac{N\omega_N}{s} \int_{E\setminus B_r} \frac{1}{(|y|-r)^{s }} dy  \\ \nonumber
\leq  \frac{N\omega_N}{s}\int_{r}^{+\infty}  \frac{1}{(t-r)^{s }} P(E\cap B_t) dt &=& -\frac{N\omega_N}{s}\int_{r}^{+\infty}  \frac{f'(t)}{(t-r)^{s}}  dt.
\end{eqnarray} 

Substituting \eqref{uno}, \eqref{due}, \eqref{tre} in \eqref{comp},  we eventually obtain
\[P_s (E\setminus B_r)\le
C' f(r)-\frac{N\omega_N}{s}\int_{r}^{+\infty}  \frac{f'(t)}{(t-r)^{s}}dt,\]
for some $C'>0$.
Hence, by the isoperimetric inequality \eqref{isoperim} we get 
\[C(N,s) f(r)^{\frac{N-s}{N}}\le
C' f(r)-\frac{N\omega_N}{s}\int_{r}^{+\infty}  \frac{f'(t)}{(t-r)^{s}}dt.\]

Recalling that $f(r)$ is decreasing to $0$ as $r\to +\infty$, we can choose $R_1>R_0$ such that 
\[
C' f(r)\leq \frac{C(N,s)}{2} f(r)^{\frac{N-s}{N}}
\] 
for all $r\geq R_1$. 
Therefore, for  $r\geq R_1$ we  obtain that  $f$ satisfies the inequality
\begin{equation}\label{ode} 
\frac{s	C(N,s)}{2N\omega_N}f(r)^{\frac{N-s}{N}}\le -\int_{r}^{+\infty}  \frac{f'(t)}{(t-r)^{s}} dt.
\end{equation}
We integrate \eqref{ode} on $(R, +\infty)$, with $R>R_1$, and we exchange the order of integration to get
\begin{equation}\label{ode2}\frac{s	C(N,s)}{2N\omega_N}\int_{R}^{+\infty}f(r)^{\frac{N-s}{N}}dr 
\le -\frac{1}{1-s}\int_{R }^{+\infty}   f'(r) (r-R)^{1-s} dr.
\end{equation}

We now compute 
\begin{eqnarray*}  & & -\frac{1}{1-s}\int_{R }^{+\infty}   f'(r) (r-R)^{1-s} dr\\ & =& - \frac{1}{1-s}\int_{R }^{R+1}   f'(r) (r-R)^{1-s}dr-\frac{1}{1-s}\int_{R+1 }^{+\infty}   f'(r) (r-R)^{1-s}dr\\ 
&\leq &   \frac{f(R)}{1-s} -\frac{f(R+1)}{1-s}  -\frac{1}{1-s}\int_{R +1}^{+\infty}   f'(r) (r-R)^{1-s} dr \\ &=&   \frac{f(R)}{1-s}+\frac{1}{1-s}\int_{R +1}^{+\infty}   f'(r)(1-(r-R)^{1-s})  dr\\ &\leq &  \frac{f(R)}{1-s}+ \int_{R+1 }^{+\infty}   f (r) (r-R)^{ -s} dr  \leq \frac{f(R)}{1-s}+ \int_{R  }^{+\infty}   f (r)   dr.
\end{eqnarray*}
Using again the fact that $f$ is decreasing to $0$, we can choose $R$ sufficiently large such that
\[\int_{R  }^{+\infty}   f (r)   dr\leq \frac{s	C(N,s)}{4N\omega_N}\int_{R  }^{+\infty} f(r)^{\frac{N-s}{N}}  dr.\] 

Substituting this inequality in \eqref{ode2}, we get that $f$ satisfies the integrodifferential inequality
\begin{equation}\label{ode3}\frac{s(1-s)C(N,s)}{4N\omega_N}\int_{R}^{+\infty}f(r)^{\frac{N-s}{N}}dr \le f(R) \end{equation} for all $R\geq R_2$, with $R_2$ sufficiently large. 

Proceeding now exactly as in \cite[Lemma 4.1]{dinoruva},
from \eqref{ode3} we can conclude that there exists $\overline{R}$
such that $f(r)=0$ for every $r\geq \overline{R}$. 
\end{proof}

Once we have boundedness of minimizers, we can obtain regularity. 

We will use  the following result   about regularity of local almost  minimizers of the fractional perimeter, proved in a more general setting in \cite[Thm 1.1, Thm 1.2]{cg}.  
Moreover, in \cite{cg} it is proved that the singular set has Hausdorff dimension at most $N-2$, improved to $N-3$ in \cite[Corollary 2]{sv}. 

\begin{theorem}\label{lemmacg} 
Let $s\in (0,1)$,  $\delta>0$, $\Omega$ an open set. Let $E$ be a  nonlocal almost minimal set. This means  that   for any $x_0\in \partial E$, for any $r<\min(\delta, d(x_0,\partial \Omega))$ and for any measurable set $F$ with $E\Delta F\subset B(x_0, r) $, the following holds \[ P_s(E, \Omega)\leq P_s(F,\Omega) + K r^N.\] Then $E$ has boundary of class $C^1$ outside of a closed singular set $S$ of Hausdorff dimension at most $N-3$. 
\end{theorem}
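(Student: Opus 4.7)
The plan is to combine two ingredients: a local $\varepsilon$-regularity theorem obtained via improvement of flatness, and a Federer-type dimension reduction on tangent cones, together with the classification of minimizing $s$-cones in low dimensions. First, I would establish uniform density estimates: there exist $c,r_0>0$ (depending on $K,s,N$) such that for every $x_0\in\partial E$ and $0<r<r_0$ one has $c\,r^N\le |E\cap B(x_0,r)|\le (1-c)\,r^N$ and $P_s(E,B(x_0,r))\le C r^{N-s}$. These are standard comparison arguments: testing almost minimality against $E\cup B(x_0,r)$ and $E\setminus B(x_0,r)$ gives perimeter bounds in terms of $|E\cap B(x_0,r)|$; combining with the relative fractional isoperimetric inequality and absorbing the lower order term $Kr^N$ (which is negligible with respect to $r^{N-s}$ when $r$ is small) yields two-sided density bounds.

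Next I would study blow-ups $E_r:=(E-x_0)/r$. The crucial scaling observation is that $P_s(rA)=r^{N-s}P_s(A)$, so after dividing by $r^{N-s}$ the almost minimality error rescales as $Kr^N/r^{N-s}=Kr^s\to 0$. Hence, using the density bounds to extract a $L^1_{\mathrm{loc}}$-convergent subsequence, any blow-up limit is a genuine local minimizer of $P_s$ in $\R^N$, i.e.\ an $s$-minimal set, and if $x_0$ is a point where the blow-up is a half-space then $\partial E$ is approximated on small scales by hyperplanes.

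The core step is the $\varepsilon$-regularity theorem for $s$-minimizers, due to Caffarelli--Roquejoffre--Savin: if a minimizer is sufficiently flat in $B_1$ (trapped between two hyperplanes at distance $\le\varepsilon$), then it is flatter in a smaller ball with geometric rate. Its proof rests on Harnack-type estimates for the nonlocal mean curvature operator linearized around a half-space. Because our $E$ is only almost minimal with error $Kr^N$, one must check that the improvement-of-flatness argument is stable under the perturbation $Kr^N$, which is absorbable after rescaling as explained above. Iterating the flatness decay at a flat point yields a $C^{1,\alpha}$ graph representation of $\partial E$ near $x_0$, proving that $\partial E\setminus S$ is $C^1$, where $S$ is the set of non-flat points.

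Finally, to bound $\dim_{\mathcal H}(S)\le N-3$ I would run Federer's dimension reduction: if $\dim_{\mathcal H}(S)>N-k$, blowing up iteratively produces a non-flat minimizing cone which is translation invariant along a $k$-dimensional subspace, hence effectively a non-flat minimizing $s$-cone in $\R^{N-k}$. The classification of minimizing $s$-cones then forbids this for small dimensions: in $\R^2$ any minimizing $s$-cone is a half-plane (direct symmetrization), and in $\R^3$ Savin--Valdinoci \cite{sv} showed the same conclusion; therefore the effective ambient dimension must be at least $3$, i.e.\ $N-k\ge 3$, giving $\dim_{\mathcal H}(S)\le N-3$. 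The main obstacle is the improvement-of-flatness step, which is genuinely nonlocal and technically delicate; the sharp singular-set dimension bound is the other hard ingredient and fundamentally relies on the two-dimensional classification of minimizing $s$-cones.
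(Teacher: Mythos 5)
The paper does not actually prove Theorem \ref{lemmacg}: it is quoted as a known result from Caputo--Guillen \cite{cg} (for the $C^1$ regularity away from a singular set of dimension at most $N-2$) together with Savin--Valdinoci \cite{sv} (which improves the bound to $N-3$), exactly as stated in the sentence preceding the theorem. Your sketch correctly reproduces the actual proof strategy in those references: uniform density and perimeter-growth estimates obtained by comparison, compactness of blow-ups after observing that the rescaled almost-minimality error $Kr^N/r^{N-s}=Kr^s\to 0$, the Caffarelli--Roquejoffre--Savin improvement-of-flatness theorem (whose adaptation to almost minimizers is the technical core of \cite{cg}), and Federer dimension reduction on tangent cones.

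However, the cone-classification step as you wrote it is wrong, and it is precisely the ingredient responsible for the exponent $N-3$. You claim that $s$-minimal cones in $\R^2$ are half-planes ``by direct symmetrization'' and attribute to Savin--Valdinoci the classification in $\R^3$. This is backwards: the Savin--Valdinoci theorem \cite{sv} is the classification of $s$-minimal cones in $\R^2$ as half-planes, obtained by a delicate second-variation/perturbation argument, not by symmetrization; and no classification of $s$-minimal cones in $\R^3$ is known. Your arithmetic also does not match your own claims: if $s$-minimal cones were flat in both $\R^2$ and $\R^3$, Federer reduction would give $\dim_{\mathcal H}(S)\le N-4$, not $N-3$. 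The correct accounting is that the trivial flatness of cones in $\R^1$ yields $\dim_{\mathcal H}(S)\le N-2$ (the bound proved in \cite{crs,cg}), and the $\R^2$ classification of \cite{sv} yields $\dim_{\mathcal H}(S)\le N-3$. Your closing sentence correctly identifies the two-dimensional classification as the decisive input, so this reads as a slip rather than a misconception, but as written the argument does not support the stated dimension bound and misstates what is known.
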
 

We start  showing that any solution  to  the isoperimetric problem \eqref{iso}
is actually also a local minimizer for a suitably defined  unconstrained problem.
\begin{lemma} \label{lemmaunc} Let \eqref{g1} hold. Let $E$ be a minimizer of \eqref{iso} with $|E|=m$. Then
there exists $R >0$ and $\mu_0$, depending on  $E$, such that $E\subseteq B_{R/2}$ 
and $E$ is a solution to 
\[  \min_{F\in B_R} \left(P_s(F)- \int_F g(x)dx +\mu \left||F|-m\right| \right)\] 
for every $\mu\geq \mu_0$. 
\end{lemma}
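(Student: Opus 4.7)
The plan is to argue by contradiction and exploit Almgren's lemma. By Proposition \ref{propositioncompact}, I first fix $R>0$ large enough that $E\subseteq B_{R/2}$, so the inclusion in the statement is automatic and the task reduces to producing $\mu_0$. Suppose the conclusion fails: then for each $n\in\N$ I can find $\mu_n\to+\infty$ and a measurable $F_n\subseteq B_R$ with
\[
\mathcal{F}(F_n)+\mu_n\big||F_n|-m\big|<\mathcal{F}(E).
\]
Since $g$ is bounded above on $B_R$ and $F_n\subseteq B_R$, this inequality forces $P_s(F_n)\leq C$ uniformly in $n$, hence $(\chi_{F_n})$ is bounded in $H^{s/2}$ and supported in $B_R$. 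Standard fractional Rellich compactness then extracts, along a subsequence, $F_n\to F_\infty$ in $L^1$ with $F_\infty\subseteq B_R$; and $\mu_n||F_n|-m|\leq C$ combined with $\mu_n\to+\infty$ forces $|F_\infty|=m$. Lower semicontinuity of $\mathcal{F}$ gives $\mathcal{F}(F_\infty)\leq\mathcal{F}(E)$, so $F_\infty$ is itself a minimizer of \eqref{iso}.

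The heart of the argument is to correct the volume of $F_n$ to $m$ while controlling the energy change by $\tau_n:=\big||F_n|-m\big|$. The key idea is to apply the Almgren construction of Lemma \ref{lemmatranslation} not to each $F_n$ (where the constants would depend on $n$) but to the limit $F_\infty$: pick $x_0,r$ satisfying \eqref{coll} for $F_\infty$ and a vector field $T\in C^1_c(B(x_0,r),\R^N)$ with $M:=\int_{F_\infty}\text{div}\,T>0$. The $L^1$-convergence $F_n\to F_\infty$ together with $\text{div}\,T\in L^\infty$ ensures $\int_{F_n}\text{div}\,T\geq M/2$ for all large $n$. Hence the same diffeomorphism $\Phi_t(x)=x+tT(x)$ changes $|F_n|$ at a rate bounded below by $M/2$, so I can select $s_n$ with $|s_n|\leq C\tau_n$ such that $\tilde F_n:=\Phi_{s_n}(F_n)$ has $|\tilde F_n|=m$.

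The remaining estimates reproduce those already in the paper. The fractional perimeter computation in Lemma \ref{lemmatranslation} yields $|P_s(\tilde F_n)-P_s(F_n)|\leq C|s_n|$, while the local Lipschitz bound on $g$ on $B_R$ and the explicit form of $\Phi_{s_n}$, handled exactly as in estimate \eqref{due}, give $|\int_{\tilde F_n}g\,dx-\int_{F_n}g\,dx|\leq C|s_n|$. Combining, $\mathcal{F}(\tilde F_n)\leq\mathcal{F}(F_n)+C\tau_n$. Since $|\tilde F_n|=m$, minimality of $E$ among volume-$m$ sets gives $\mathcal{F}(E)\leq\mathcal{F}(\tilde F_n)\leq\mathcal{F}(F_n)+C\tau_n$, whereas by hypothesis $\mathcal{F}(F_n)<\mathcal{F}(E)-\mu_n\tau_n$. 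Subtracting yields $(\mu_n-C)\tau_n<0$; once $\mu_n>C$ this forces $\tau_n=0$, but then $F_n$ has volume exactly $m$ and satisfies $\mathcal{F}(F_n)<\mathcal{F}(E)$, directly contradicting minimality of $E$.

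The main obstacle I anticipate is precisely the transfer of the Almgren construction from $F_\infty$ to the sequence $F_n$ with a constant independent of $n$: this is the only step of the plan that genuinely uses $L^1$-convergence rather than mere volume convergence, and it is what makes the penalization argument close. Once that step is secured, the two variational estimates are essentially repetitions of the calculations already carried out in Lemma \ref{lemmatranslation} and in the proof of Proposition \ref{propositioncompact}.
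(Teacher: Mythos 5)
Your argument is correct, but it takes a genuinely different route from the paper. The paper restores the volume constraint by a global dilation: with $\lambda_n=|F_n|^{-1/N}$ it sets $\tilde F_n=\lambda_n F_n$, uses the exact scaling identity $P_s(\lambda E)=\lambda^{N-s}P_s(E)$ together with the local Lipschitz bound on $g$ to show that the energy changes by $O(|\lambda_n-1|)=O\big(\big||F_n|-m\big|\big)$, and then divides through by $\big||F_n|-m\big|$ to bound $\mu_n$; no compactness is invoked. You restore the volume constraint by a local inner variation (the Almgren diffeomorphism), and to make the constants independent of $n$ you first extract an $L^1$-limit $F_\infty$ via fractional Rellich compactness, fix the vector field $T$ on $F_\infty$, and transfer the lower bound on $\int\operatorname{div}T$ to the $F_n$'s by $L^1$-convergence. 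The paper's route is shorter and entirely self-contained; yours is more robust, since it uses no scaling law and would carry over verbatim to anisotropic or general-kernel nonlocal perimeters for which dilation does not interact cleanly with the energy, and it keeps the competitors $\tilde F_n$ inside $B_R$ automatically (whereas $\lambda_n F_n$ only lies in a slightly larger ball). One small point worth making explicit when you write it out: the $o(t)$ remainder in the volume expansion $|\Phi_t(F_n)|=|F_n|+t\int_{F_n}\operatorname{div}T+o(t)$ must be seen to be $O(t^2)$ with a constant depending only on $T$ and $|B_R|$, uniformly in $n$, so that the implicit-function step selecting $s_n$ with $|s_n|\le C\big||F_n|-m\big|$ has uniform constants; this is immediate from $J\Phi_t=\det(I+tDT)$ but should be said.
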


\begin{proof}
First of all, without loss of generality,  for simplicity  we let $m=1$. 
Let $E$ be a minimizer of $\cF$ among sets $F$ with $|F|=1$. Then, by Proposition 
\ref{propositioncompact} there exist $R$ depending on $E$, $N, s$ and $\|g\|_\infty$ such 
that $E\subseteq B_{R/2}$.

We argue by contradiction and we assume there exists a sequence $\mu_n\to+\infty$ and $F_n\subseteq  B_R$
such that \begin{equation}\label{min1} P_s(F_n)- \int_{F_n} g(x)dx +\mu_n ||F_n|-1| <   P_s(E)-\int_E g(x)dx.\end{equation}  

We  observe that  $||F_n|-1|>0$, since otherwise we would get a contradiction to the previous inequality 
by minimality of $E$ among sets of volume $1$. 

From now on we assume $\mu_n> \|g\|_{L^\infty(B_R)}$ for every $n$. 
We observe that \[\left| \int_{F_n} g(x)dx\right| \leq \|g\|_{L^\infty(B_R)}|F_n|\leq
 \|g\|_{L^\infty(B_R)} ||F_n|-1| +\|g\|_{L^\infty(B_R)}.\] Using this computation and minimality of $F_n$, 
 say \eqref{min1}, we get that there exists $C$ indipendent of $n$ such that 
  \[P_s(F_n)\leq  P_s(E)-\int_E g(x)dx-(\mu_n-\|g\|_{L^\infty(B_R)})||F_n|-1|+ \|g\|_{L^\infty(B_R)}\leq C , \]
and 
  \[(\mu_n-\|g\|_{L^\infty(B_R)})||F_n|-1|\leq P_s(E)-\int_E g(x)dx  +\|g\|_{L^\infty(B_R)}\leq C.\]
 In particular this implies that $|F_n|\to  1$ as $n\to +\infty$. 
 
Let $\lambda_n= |F_n|^{-1/N}$.  Then, by the computation above, $\lambda_n\to 1$ as $n\to +\infty$. 
We define $\tilde F_n= \lambda_n F_n$. So, by definition $|\tilde F_n|=1$ and, by minimality of $E$, we get 
\begin{eqnarray} \nonumber  P_s(E)-\int_E g(x)dx & \leq &   P_s(\tilde F_n) -\int_{\tilde F_n} g(x)dx   =  \lambda_n^{N-s} P_s(F_n) - \lambda_n^N\int_{F_n} g(\lambda_n x) dx \\ \nonumber 
&\leq & \lambda_n^{N-s} P_s(F_n) - \lambda_n^N\int_{F_n} g(x) dx + \lambda_n^N\int_{F_n} |g(\lambda_n x)-g(x)| dx \\ \label{min2} &\leq & \lambda_n^{N-s} P_s(F_n) - \lambda_n^N\int_{F_n} g(x) dx +     R K_g(R) |\lambda_n-1|   
\end{eqnarray} where $K_g(R)$ is the Lipschitz constant of $g$ in $B_R$.  
So, using both \eqref{min1} and \eqref{min2}, we obtain that
\[ \mu_n ||F_n|-1| <(\lambda_n^{N-s}-1) P_s(F_n) - (\lambda_n^N-1)\int_{F_n} g(x) dx +  K_g(R) R |\lambda_n-1|.\]
So, we divide both sides by $||F_n|-1|=|\lambda_n^N-1| \lambda_n^{-N}$ and we obtain, recalling that $P_s(F_n)\leq C$, 
\[\mu_n<   \frac{|\lambda_n^{N-s}-1|}{|\lambda_n^{N }-1|}\lambda_n^N C + |\lambda_n^N-1|\|g\|_{L^\infty(B_R)} +\lambda_n^N\|g\|_{L^\infty(B_R)}+  R K_g(R) \frac{|\lambda_n -1|}{|\lambda_n^{N}-1|}\lambda_n^N.\]
So, in particular, recalling that $\lambda_n\to 1$ as $n\to +\infty$, we get that \[\mu_n\leq C\] for some constant depending on $R,  \|g\|_{L^\infty(B_R)}, K_g(R), N, s$, in contradiction with the assumption that
$\mu_n\to +\infty$. 
\end{proof}
Finally we will use the bootstrap argument in \cite[Theorem 5]{bfv} and the Lipschitz regularity of $g$ 
    to improve the regularity of $\partial E$ from $C^1$ to $C^{2,\alpha}$ for any $\alpha<s$. 
\begin{corollary}\label{regcor} Assume \eqref{g1}.  Let $E$ be a minimizer of \eqref{iso}.  Then $\partial E$  is of class $C^{2,\alpha}$ for every $\alpha<s$, up to a closed singular set $S$ of Hausdorff dimension at most $N-3$. 
\end{corollary}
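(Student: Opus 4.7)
\medskip
\noindent\textbf{Proof plan.} The plan is to combine Lemma~\ref{lemmaunc}, which converts the volume-constrained minimizer $E$ into an unconstrained penalized minimizer, with the almost-minimality regularity of Theorem~\ref{lemmacg}, and then to bootstrap the resulting $C^1$ regularity up to $C^{2,\alpha}$ using the Lipschitz assumption on $g$.

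First, by Lemma~\ref{lemmaunc} there exist $R>0$ and $\mu_0>0$, depending on $E$, such that $E\subseteq B_{R/2}$ and $E$ minimizes
\[
\mathcal{G}(F) := P_s(F)-\int_F g(x)\,dx + \mu_0\bigl| |F|-m\bigr|
\]
among measurable sets $F\subseteq B_R$. I would use this to verify the almost-minimality hypothesis of Theorem~\ref{lemmacg} inside the open set $\Omega := B_{R+1}$. Fix $x_0\in\partial E\subseteq B_{R/2}$ and take any $r<\min(1,R/2)$; then any competitor $F$ with $E\Delta F\subset\subset B(x_0,r)$ automatically lies in $B_R$. Two elementary bounds hold:
\[
\left|\int_F g\,dx-\int_E g\,dx\right|\leq \|g\|_{L^\infty(B_R)}\,|E\Delta F|\leq \|g\|_{L^\infty(B_R)}\,\omega_N r^N, \qquad \mu_0\bigl||F|-m\bigr|\leq \mu_0\,\omega_N r^N.
\]
Moreover, since $E\Delta F\subset B(x_0,r)\subset \Omega$, pairs $(x,y)$ with both endpoints in $\Omega^c$ contribute equally to $P_s(E)$ and $P_s(F)$, so that $P_s(E)-P_s(F)=P_s(E,\Omega)-P_s(F,\Omega)$. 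Inserting $\mathcal{G}(E)\leq \mathcal{G}(F)$ then yields the almost-minimality estimate
\[
P_s(E,\Omega)\leq P_s(F,\Omega)+Kr^N,\qquad K:=\bigl(\|g\|_{L^\infty(B_R)}+\mu_0\bigr)\omega_N.
\]
Theorem~\ref{lemmacg} applied with this $K$ gives that $\partial E$ is of class $C^1$ outside a closed singular set $S$ of Hausdorff dimension at most $N-3$.

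To upgrade from $C^1$ to $C^{2,\alpha}$ on the regular part, I would write the Euler--Lagrange equation associated to the penalized functional $\mathcal{G}$: on $\partial E\setminus S$ the set $E$ solves the prescribed fractional curvature equation $H_s(x)=g(x)+\mu_m$ for some Lagrange multiplier $\mu_m\in[-\mu_0,\mu_0]$. With $\partial E$ of class $C^1$ and with $g$ locally Lipschitz, the bootstrap scheme of \cite[Theorem 5]{bfv} applies and yields $\partial E\in C^{2,\alpha}$ for every $\alpha<s$, completing the proof.

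The main obstacle I expect is the last step: one must verify that the hypotheses of the bootstrap in \cite[Theorem 5]{bfv} are met by the Lagrange-multiplier form $H_s=g+\mu_m$, and that the Lipschitz regularity of $g$ is precisely what transfers, through the smoothing of the nonlocal curvature operator, into $C^{2,\alpha}$ regularity for every $\alpha<s$. By contrast, the almost-minimality step is essentially cosmetic, since both the potential term and the volume penalty are controlled by $|E\Delta F|\lesssim r^N$, which is exactly the scale tolerated by Theorem~\ref{lemmacg}.
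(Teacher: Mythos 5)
Your proposal follows essentially the same route as the paper: invoke Lemma~\ref{lemmaunc} to pass to the penalized unconstrained problem, check the almost-minimality hypothesis with $K=(\|g\|_{L^\infty(B_R)}+\mu_0)\omega_N$, apply Theorem~\ref{lemmacg} to get $C^1$ regularity away from a singular set of dimension at most $N-3$, and then bootstrap via the Lipschitz regularity of $g$ and the result of Barrios--Figalli--Valdinoci. The one place where you are slightly more careful than the paper is the passage from $P_s(E)-P_s(F)$ to the localized quantities $P_s(E,\Omega)-P_s(F,\Omega)$, which the paper leaves implicit but which is needed to literally match the hypothesis of Theorem~\ref{lemmacg}; your observation that the exterior--exterior contributions cancel when $E\Delta F\subset\subset\Omega$ correctly fills this in.
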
 
\begin{proof} Observe that Lemma  \ref{lemmaunc} implies  that $E$ is a nonlocal almost minimal set in $B_R$.  
Take $\delta<R/2$, $\Omega=B_R$ and $K=(\|g\|_{L^\infty(B_R)}+ \mu_0)\omega_N$. Then
for any $x_0\in \partial E$, for any $r<\delta$ and for any measurable set $F$ with $E\Delta F\subset B(x_0, r) $, the following holds \begin{eqnarray*} P_s(E)&\leq&  P_s(F) +\|g\|_{L^\infty(B_R)} |E\Delta F|+  \mu_0 ||E|-|F|| \\ &\leq &
P_s(F) + (\|g\|_{L^\infty(B_R)}+\mu_0)|E\Delta F|\leq P_s(F) + K r^N.\end{eqnarray*}

Therefore , so we can apply Theorem \ref{lemmacg} and conclude that  $\partial E$  is of class $C^{1}$, 
up to a closed singular set $S$ of Hausdorff dimension at most $N-3$. Eventually we use the bootstrap argument in \cite[Theorem 1.5]{bfv} and the Lipschitz regularity of $g$ 
    to improve the regularity of $\partial E$ from $C^1$ to $C^{2,\alpha}$ for any $\alpha<s$.  \end{proof} 
\section{Asymptotics of minimizers for small  volumes} \label{sectionas}

In this section we discuss the asymptotic behavior of minimizers of \eqref{iso} in the small volume regime.
We will prove in particular that the volume term becomes irrelevant for small volumes. 

First of all observe that if $E$ is a minimizer of \eqref{iso} with mass constraint $|E|=m$, then $E_\lambda=\lambda E$ is a minimizer of
\begin{equation}\label{rescaled}
\cF_\lambda(E)= P_s (E)- \lambda^{-s} \int_E g\left(\frac{x}{\lambda}\right) dx, 
\end{equation}
among all sets of volume $|E|=\lambda^N m$. 
Indeed $\cF_\lambda(E_\lambda)=\lambda^{N-s}\cF (E)$.

We show  that minimizers of \eqref{iso}, properly rescaled, tend to a ball as the volume goes to zero.

\begin{proposition} \label{smallvolume} Assume that $g\in L^\infty$.
Then for $\eps\in (0,1)$ let $E_\eps$ be a minimizer of \eqref{iso} with volume constraint 
$|E_\eps|=\eps^N \omega_N$, and let $\tilde E_\eps= \eps^{-1} E_\eps$.
Then, as $\eps\to 0$, the sets $\tilde E_\eps$ converge in the $L^1$-topology,  up to translations,
to the unit ball $B$, and in particular there holds 
\begin{equation}\label{eqeq}
\min_{x\in \R^N}\left| \tilde E_\eps \Delta B(x,1)\right| \le C\|g\|_\infty  \eps^s\,,
\end{equation}
where the constant $C$ depends only on $N,\,s$.
\end{proposition}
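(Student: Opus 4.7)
The strategy is a rescaling and comparison with a ball, leading to a fractional isoperimetric deficit estimate for the rescaled sets $\tilde E_\eps$, which can then be fed into a quantitative version of \eqref{isoperim}. The starting point is to test the minimality of $E_\eps$ against the ball $B_\eps:=B(0,\eps)$, which has the same volume $\eps^N\omega_N$. This gives
\[P_s(E_\eps)-\int_{E_\eps}g\,dx \;\le\; P_s(B_\eps)-\int_{B_\eps}g\,dx\,,\]
and bounding both potential integrals trivially by $\omega_N\|g\|_\infty\,\eps^N$, together with the scaling identity $P_s(B_\eps)=\eps^{N-s}P_s(B)$, yields
\[P_s(E_\eps)\;\le\; \eps^{N-s}P_s(B)+2\omega_N\|g\|_\infty\,\eps^N\,.\]

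Next I would rescale: since $P_s(\tilde E_\eps)=\eps^{-(N-s)}P_s(E_\eps)$ and $|\tilde E_\eps|=\omega_N=|B|$, dividing the previous inequality by $\eps^{N-s}$ produces the key \emph{isoperimetric deficit bound}
\[0\;\le\; P_s(\tilde E_\eps)-P_s(B)\;\le\; 2\omega_N\|g\|_\infty\,\eps^s\,,\]
where the lower bound is exactly \eqref{isoperim} applied to $\tilde E_\eps$. The $L^1$-convergence of $\tilde E_\eps$ to a unit ball (up to translation) then follows from the quantitative fractional isoperimetric inequality of Figalli--Fusco--Maggi--Millot--Morini, which asserts
\[\Bigl(\min_{x\in\R^N}\bigl|\tilde E_\eps\Delta B(x,1)\bigr|\Bigr)^2 \;\le\; C(N,s)\,\bigl(P_s(\tilde E_\eps)-P_s(B)\bigr)\,.\]
Combining the two inequalities proves that $\min_x|\tilde E_\eps\Delta B(x,1)|\to 0$ and gives an explicit decay rate.

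The main obstacle is recovering the sharp \emph{linear} rate $\|g\|_\infty\eps^s$ claimed in \eqref{eqeq} rather than the $\sqrt{\|g\|_\infty}\,\eps^{s/2}$ rate that the quadratic stability above delivers directly. To close this gap, one would use the $L^1$-closeness just established as input to Corollary \ref{regcor} (applied to the rescaled functional \eqref{rescaled}, whose potential $\eps^{-s}g(\cdot/\eps)$ is still locally Lipschitz), and then write $\partial\tilde E_\eps$ as a small normal graph $u$ over $\partial B$ with controlled $C^{2,\alpha}$-norm. A Fuglede-type first-order expansion of $P_s$ around $B$, together with the Euler--Lagrange equation $H_s=g(\eps\,\cdot)+\mu_\eps$ for the rescaled minimizer, upgrades the quadratic stability to a linear control of the $L^1$-asymmetry by the deficit. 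Carrying out this bootstrap while tracking the explicit dependence on $\|g\|_\infty$, and separately verifying that no mass of $\tilde E_\eps$ escapes to infinity so that the translation in \eqref{eqeq} can be chosen uniformly, are the most delicate technical points.
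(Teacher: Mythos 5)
Your opening moves match the paper's: rescale to $\tilde E_\eps$, test minimality against a unit ball, and invoke the quantitative fractional isoperimetric inequality of \cite{fffmm}. But you lose the sharp rate precisely where you bound the two potential integrals ``trivially by $\omega_N\|g\|_\infty\,\eps^N$.'' That crude bound throws away the fact that the potential contributions over the common region $\tilde E_\eps\cap B(x,1)$ cancel in the difference. The paper's estimate instead keeps this cancellation: testing $\tilde E_\eps$ against $B(x,1)$ in the rescaled functional $\cF_{\eps^{-1}}$ and subtracting the common piece gives
\[
P_s(\tilde E_\eps)-P_s(B)\;\le\;-\eps^s\int_{B(x,1)\setminus\tilde E_\eps} g(\eps y)\,dy+\eps^s\int_{\tilde E_\eps\setminus B(x,1)} g(\eps y)\,dy\;\le\;\eps^s\|g\|_\infty\,\bigl|\tilde E_\eps\Delta B(x,1)\bigr|
\]
for every $x$. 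Feeding this into the quantitative inequality $\min_x|\tilde E_\eps\Delta B(x,1)|\le C(N,s)\bigl(P_s(\tilde E_\eps)-P_s(B)\bigr)^{1/2}$, taking the infimum over $x$ on the right, and \emph{absorbing} $\min_x|\tilde E_\eps\Delta B(x,1)|^{1/2}$ from right to left immediately yields $\min_x|\tilde E_\eps\Delta B(x,1)|\le C^2\|g\|_\infty\,\eps^s$, with the linear rate and no further work.

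The bootstrap you sketch to upgrade $\eps^{s/2}$ to $\eps^s$ is therefore unnecessary, and it also runs into a hypothesis mismatch: Proposition \ref{smallvolume} assumes only $g\in L^\infty$, whereas Corollary \ref{regcor} and the Euler--Lagrange/Fuglede machinery you want to invoke require $g$ to be locally Lipschitz (assumption \eqref{g1}). So the proposed repair would silently strengthen the hypotheses of the proposition. The lesson: retain the deficit bound in terms of the symmetric difference itself, rather than discarding it for a volume bound, and the absorption trick does the rest.
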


\begin{proof}
Note that by the observation above $\tilde E_\eps$ is a minimizer of 
the functional $\cF_{\eps^{-1}}$, defined in \eqref{rescaled},
among sets of volume $\omega_N$. 
By minimality of $\tilde E_\eps$ we then get, for every $x\in \R^N$,
\begin{eqnarray*}
P_s(B)\le P_s(\tilde E_\eps) &\le&  P_s(B(x,1))-\eps^s\int_{B(x,1)} g(\eps y)\,dy +
\eps^s\int_{\tilde E_\eps} g(\eps y)\,dy \\ 
&\le&  P_s(B)-\eps^s\int_{B(x,1)\setminus \tilde E_\eps} g(\eps y)\,dy +
\eps^s\int_{\tilde E_\eps\setminus B(x,1)} g(\eps y)\,dy 
\\ 
&\le&  P_s(B)+\eps^s\| g\|_\infty|\tilde E_\eps\Delta B(x,1)|\,,
\end{eqnarray*}
which gives
\[
P_s(\tilde E_\eps)-P_s(B)\le \eps^s\| g\|_\infty|\tilde E_\eps\Delta B(x,1)|\,.
\]
Recalling the quantitative isoperimetric inequality for the fractional perimeter (see \cite[Thm 1.1]{fffmm}) 
\[
\min_{x\in \R^N}\left| \tilde E_\eps \Delta B(x,1)\right| \le 
C(N,s) \left( P_s(\tilde E_\eps)-P_s(B)\right)^\frac 12\,,
\]
where $C(N,s)$ depends only on $N,\,s$, we then get
\[
\min_{x\in \R^N}\left| \tilde E_\eps \Delta B(x,1)\right| \le 
C(N,s)\|g\|_\infty^{\frac 12} \eps^{\frac s2}\min_{x\in \R^N}\left| \tilde E_\eps \Delta B(x,1)\right|^\frac 12\,,\] from which we obtain \eqref{eqeq}.
\end{proof}

\begin{remark}\rm
The result in Proposition \ref{smallvolume} also holds if  $g$ belongs to $L^\infty_{loc}$ 
and is coercive. 
Indeed, the proof is the same once we show that the points $x$ in \eqref{eqeq} can be chosen in a fixed compact set,
independent of $\eps$, and this can be easily proved reasoning as in Proposition \ref{teocoe}.
\end{remark}

\section{Existence result}\label{sectionexistence}

We now prove existence of minimizers under suitable assumptions on the function $g$. 

\subsection{Periodic case}The first case  we consider is the case in which $g$  is $\Z^N$ periodic. 
The construction of  a minimizer to \eqref{iso} follows  the same strategy as in the proof of \cite[Theorem 7.2]{dinoruva},  which is based on a concentrated compactness type argument. 

\begin{theorem}\label{teoex} Assume \eqref{g1} and that $g$ is a $\Z^N$ periodic function.  Then,  for every $m>0$ there exists a bounded minimizer $E$   to \eqref{iso}.  
\end{theorem}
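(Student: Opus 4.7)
The plan is to run a concentration--compactness argument in the spirit of \cite[Theorem 7.2]{dinoruva}, exploiting crucially that integer translations leave $\mathcal{F}$ invariant because of the $\Z^N$-periodicity of $g$.

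Since $g$ is continuous and periodic it is globally bounded, and comparing with a ball of volume $m$ shows that the infimum $I:=\inf\{\mathcal{F}(E):|E|=m\}$ is finite, so for any minimizing sequence $\{E_n\}$ one has a uniform bound on $P_s(E_n)$. A standard consequence of the relative fractional isoperimetric inequality (e.g.\ a localized version of \eqref{isoperim} on unit cubes) is that a set of volume $m$ with bounded fractional perimeter cannot spread arbitrarily thinly: there exists $\delta>0$ depending only on $N,s,m,\|g\|_\infty$ and vectors $k_n\in\Z^N$ such that $|(E_n-k_n)\cap Q|\ge\delta$, where $Q=[0,1]^N$. By periodicity $\mathcal{F}(E_n-k_n)=\mathcal{F}(E_n)$, so after an integer translation the sequence carries definite mass in a fixed unit cell. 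Compactness of the embedding $\{\chi_E:P_s(E)\le C\}\hookrightarrow L^1_{loc}$ then yields a subsequence with $E_n\to E^{(1)}$ in $L^1_{loc}$ and $|E^{(1)}|\ge\delta$.

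If $|E^{(1)}|=m$, then $E^{(1)}$ is a minimizer by the lower semicontinuity of $P_s$ and the continuity of $\int g$ along the convergence (which, once vanishing of escaping positive mass is ruled out by the same isoperimetric bound, is actually $L^1(\R^N)$). Otherwise $m_1:=|E^{(1)}|<m$, and the remainder $E_n\setminus B_{R_n}$ for suitable $R_n\to\infty$ has volume $m-m_1+o(1)$ and, by \eqref{psunion}, controlled fractional perimeter. Iterating the concentration step on the remainder produces an at-most-countable family of limit sets $E^{(1)},E^{(2)},\ldots$ of strictly positive volumes $m_i$ with $\sum_i m_i\le m$. The fractional isoperimetric inequality \eqref{isoperim} gives $P_s(E^{(i)})\ge C(N,s)\,m_i^{(N-s)/N}$; together with $\sum_i P_s(E^{(i)})\le\liminf_n P_s(E_n)<\infty$ this forces the procedure to terminate after finitely many pieces $E^{(1)},\ldots,E^{(K)}$, and since any positive residual mass would itself concentrate somewhere by the same argument, it also forces $\sum_{i=1}^K m_i=m$ and $\sum_{i=1}^K\mathcal{F}(E^{(i)})\le\liminf_n\mathcal{F}(E_n)=I$.

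The final step is to reassemble the pieces. I choose integer vectors $z_1,\ldots,z_K\in\Z^N$ with $\min_{i\ne j}|z_i-z_j|$ so large that the translates $E^{(i)}+z_i$ are pairwise disjoint, and set
\[
\widetilde E:=\bigcup_{i=1}^K\bigl(E^{(i)}+z_i\bigr).
\]
Then $|\widetilde E|=\sum_i m_i=m$; by $\Z^N$-periodicity of $g$, $\int_{\widetilde E}g=\sum_i\int_{E^{(i)}}g$; and by the splitting formula \eqref{psunion}, $P_s(\widetilde E)\le\sum_i P_s(E^{(i)})$. Therefore $\mathcal{F}(\widetilde E)\le\sum_i\mathcal{F}(E^{(i)})\le I$, so $\widetilde E$ is a minimizer; its boundedness then follows from Proposition \ref{propositioncompact}. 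The main obstacle I anticipate is the bookkeeping in the iterative extraction: showing that no mass disappears in the limit, i.e.\ $\sum_i m_i=m$ together with the matching energy inequality. The fractional isoperimetric inequality \eqref{isoperim} is the essential tool here, since it penalizes fragmentation into arbitrarily small pieces and, coupled with periodicity, lets one collapse a spread-out minimizing sequence into a single bounded competitor without any cost in $\mathcal{F}$.
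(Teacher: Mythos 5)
Your overall strategy — concentration–compactness, integer translations using periodicity, reassembly by placing the limit pieces far apart — is the same as the paper's, but the implementation differs in how the pieces are extracted, and there is one genuine gap.

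The gap: you claim that the isoperimetric inequality $P_s(E^{(i)})\ge C(N,s)\,m_i^{(N-s)/N}$ together with $\sum_i P_s(E^{(i)})<\infty$ ``forces the procedure to terminate after finitely many pieces.'' That inference is false. Since $(N-s)/N<1$, the series $\sum_i m_i^{(N-s)/N}$ can converge with infinitely many nonzero $m_i$ (take, e.g., $m_i\sim i^{-2N/(N-s)}$, so $m_i^{(N-s)/N}\sim i^{-2}$), and no uniform lower bound on the $m_i$ is available because the concentration threshold $\delta$ degrades as the residual mass shrinks. So you must in general deal with countably many pieces. Fortunately the rest of your argument can be repaired: the reassembly inequality $P_s(\widetilde E)\le\sum_i P_s(E^{(i)})$ still holds for a countable disjoint union (the cross terms in \eqref{psunion} are all subtracted and nonnegative), and with \eqref{volume}-type bookkeeping one can still conclude $\sum_i m_i = m$. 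But as written the step is wrong and needs to be rephrased.

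Methodologically you implement the concentration differently from the paper. You iterate ``translate to concentrate, pass to a limit, cut off a suitable ball $B_{R_n}$ and recurse on the remainder,'' which is the classical Lions dichotomy scheme; the ``suitable $R_n$'' step (so that the cross term in \eqref{psunion} is negligible and the remainder has controlled $P_s$) is doable but somewhat delicate. The paper instead partitions $\R^N$ into unit cubes, orders the masses $x_{i,n}=|E_n\cap Q_{i,n}|$, invokes Lemma~\ref{lemmagn} for a tail bound, and groups cubes into equivalence classes of bounded mutual distance; this produces all limit sets $G_i$ in one step and handles the (possibly countable) multiplicity cleanly via \eqref{limitbis}--\eqref{volume}--\eqref{limittris}. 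A second difference: you assemble $\widetilde E$ and assert $\mathcal F(\widetilde E)\le I$ directly, obtaining a (possibly unbounded) minimizer and appealing to Proposition~\ref{propositioncompact} afterwards; the paper instead derives a strict inequality contradiction from \eqref{psunion} if two or more pieces have positive volume, which immediately gives a single bounded piece. Both routes lead to the same conclusion, but the contradiction route avoids any worry about assembling an infinite union.
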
 
\begin{proof} 
Without loss of generality we shall assume that $m=1/2$, since the argument is the same 
for all values of $m>0$.

We recall  a technical Lemma proved in \cite[Lemma 4.2]{gn}.

\begin{lemma}\label{lemmagn}
Let $C>0$ and let $\{x_i\}_{i\in \mathbb N}$ be a non-increasing sequence of positive numbers such that
\[
\sum_{i=1}^\infty x_i^\frac{N-s}{N} \le C \qquad \text{and}\qquad
\sum_{i=1}^\infty x_i=\frac 1 2\,.
\]
Then there exists $k_0\in\mathbb N$ such that, for all $k\ge k_0$ there holds
\[
\sum_{i=k+1}^\infty x_i \le \frac{1}{(2Ck)^\frac{s}{N}}\,.
\]
\end{lemma}

Let now $E_n$ be a minimizing sequence for \eqref{iso}, that is, 
\[
\lim_{n\to\infty}\cF(E_n)=\inf_{|E|=\frac 12} \cF(E).
\]
In particular, since the function $g$ is bounded, we have
\[
P_s(E_n)\le \cF(E_n) + \int_{E_n} g(x)dx \le C \,,
\]
where $C$ does not depend on $n$. For $n\in\mathbb N$, we also let
$\{Q_{i,n}\}_{i\in\mathbb N}$ be a partition of $\R^N$
into disjoint unit cubes such that the quantities $x_{i,n}=|E_n\cap Q_{i,n}|$ are non-increasing in $i$.
In particular, there holds
\begin{equation}\label{condx}
\sum_{i=1}^\infty x_{i,n} =m= \frac{1}{2}.
\end{equation}
Recalling the fractional isoperimetric inequality \eqref{isoperim}, which can be also localized in 
Lipschitz domains (see \cite[Lemma 2.5]{dinoruva}), we also have
\[
\sum_{i=1}^\infty x_{i,n}^\frac{N-s}{N}\le C  \sum_{i=1}^\infty P_s(E_n, Q_{i,n})
\le 2C P_s(E_n)\le C',
\]
for some constants $C,C'>0$. By Lemma \ref{lemmagn} we then obtain that 
\begin{equation}\label{estx}
\sum_{i=k+1}^\infty x_{i,n} \le c\,k^{-\frac{s}{N}}\,,
\end{equation}
for some $c>0$ and for all $k\in \mathbb N$. By a diagonal argument, up to extracting a subsequence, we can assume that $x_{i,n}\to \alpha_i$ as $n\to\infty$, for some $\alpha_i\in [0,1/2]$.
By \eqref{condx} and \eqref{estx} we then get
\begin{equation}\label{eqstella}
\sum_{i=1}^{\infty} \alpha_i=\frac 12\, .
\end{equation}
Fix now $z_{i,n} \in Q_{i,n}$. Up to extracting a further subsequence, we can suppose that $d(z_{i,n},z_{j,n}) \to
c_{ij} \in [0,+\infty]$, and 
that there exists $G_i\subseteq\R^N$ such that
\begin{equation}\label{limit}
\left(E_{n}-z_{i,n}\right) \to G_i \quad 
\textrm{ in the } L^1_{\rm loc}\textrm{-convergence}
\end{equation}
for every $i\in\mathbb N$. 
We say that $i \sim j$ if $c_{ij} < +\infty$ and we denote by $[i]$ the equivalence class of $i$. Notice that $G_i$
equals $G_j$ up to a translation, if $i\sim j$. Let
$\mathcal A:=\{[i]: i\in\N\}$.
We claim that
\begin{equation}\label{limitbis}
\sum_{[i]\in\mathcal A} P_s(G_i)\le \liminf_{n\to +\infty} P_s(E_n)\,.
\end{equation}
To prove it, we fix~$M\in\N$ and~$R>0$. Let $Q_R=[-R,R]^N$. 
We take different equivalence classes~$i_1,\dots,i_M$
and we notice that if~$i_k\ne i_j$ then the set~$z_{i_k,n}+Q_R$
is moving far apart from the set $z_{i_j,n}+Q_R$, and so we have
$$\lim_{n\to +\infty} 
\int_{z_{i_k,n}+Q_R}\int_{z_{i_j,n}+Q_R}\frac{dx\,dy}{|x-y|^{N+s}}=0.$$
By~\eqref{limit}, the lower semicontinuity of the perimeter and \eqref{psunionlocal},  we obtain
\begin{eqnarray*}
&&\sum_{k=1}^M P_s(G_{i_k}, Q_R)
\le \liminf_{n\to +\infty}\ \sum_{k=1}^M P_s(E_n, (z_{i_k,n}+Q_R))
\\&& \le \liminf_{n\to +\infty} P_s\left(E_n, \bigcup_{k=1}^M (z_{i_k,n}+Q_R)\right)
+ 2\sum_{\frac{1\le k,j\le M}{i_k\neq i_j}} 
\int_{z_{i_k,n}+Q_R}\int_{z_{i_j,n}+Q_R}\frac{dx\,dy}{|x-y|^{N+s}}
\\ && \le \liminf_{n\to +\infty}\ P_s(E_n).
\end{eqnarray*}
By sending first~$R\to +\infty$ and then~$M\to +\infty$, this yields~\eqref{limitbis}.

Now we claim that 
\begin{equation}\label{volume}
\sum_{[i]\in\mathcal A} |G_i|=\frac 12 .
\end{equation}
Indeed, for every $i\in\mathbb N$ and $R>0$ we have
\[
|G_i|\ge |G_i\cap Q_R| = \lim_{n\to +\infty} |(E_n-z_{i,n})\cap Q_R|.
\]
If $j$ is such that $j \sim i$ and $c_{ij} \le \frac{R}{2}$, 
possibly increasing $R$ we have 
$Q_{j,n}- z_{i,n} \subset Q_R$ for all $n\in\mathbb N$, so that
\begin{eqnarray*} 
|(E_n-z_{i,n})\cap Q_R| &=&
\sum_{j=1}^{I_n}|(E_n-z_{i,n})\cap Q_R\cap (Q_{j,n}-z_{i,n})|
\\ &\geq&\sum_{j:\,c_{ij} \leq \frac{R}{2}} 
|(E_n-z_{i,n})\cap Q_R\cap (Q_{j,n}-z_{i,n})|
\\
&=&\sum_{j:\,c_{ij} \leq \frac{R}{2}} 
|(E_n-z_{i,n})\cap (Q_{j,n}-z_{i,n})|\\ &=&
\sum_{j:\,c_{ij} \leq \frac{R}{2}} 
|E_n\cap Q_{j,n}|,\end{eqnarray*}
and so
\[
|G_i|
\ge\lim_{n\to +\infty} \left|\left(E_{n} -z_{i,n}\right) \cap Q_R\right|\geq 
\lim_{n \to +\infty} 
\sum_{j:\,c_{ij} \leq \frac{R}{2}} |E_{n} \cap Q_{j,n}|=\sum_{j:\,c_{ij} \leq \frac{R}{2}} \alpha_j.
\]
Letting $R\to +\infty$ we then have
\[
|G_i| \geq \sum_{j:\,i\sim j} \alpha_j=\sum_{j\in[i]}\alpha_j\,, 
\]
hence, recalling \eqref{eqstella},
\[
\sum_{[i]\in\mathcal A} |G_i| \ge \frac 12 ,
\]
thus proving \eqref{volume} (since the other inequality is trivial).

Let now 
\[
E^{[i]}_n := E_n\cap \bigcup_{j\sim i}Q_{j,n},
\]
and observe that we still have that the sets $(E^{[i]}_n-z_{i,n})$ converge to $G_i$ as
$n\to +\infty$, in the $L^1_{\rm loc}$-convergence. As a consequence, we obtain
\begin{equation}\label{limittris}
\sum_{[i]\in\mathcal A} \int_{G_i} g(x)dx = 
\lim_{n\to +\infty}\sum_{[i]\in\mathcal A} \int_{E^{[i]}_n-z_{i,n}} g(x)dx = 
\lim_{n\to +\infty} \int_{E_n}g(x)dx\,.
\end{equation}
Putting together \eqref{limitbis} and \eqref{limittris} we then get
\begin{equation}\label{limitfour}
\sum_{[i]\in\mathcal A}\cF(G_i)\le \liminf_{n\to +\infty}\cF(E_n)=\inf_{|E|=\frac 12}\cF(E)\,.
\end{equation}
This means in particular that each set $G_i$ is a minimizer of $\cF$ among sets of volume equal to 
$|G_i|$, hence it is bounded thanks to Proposition \ref{propositioncompact}. 

Assume now that at least two of the sets $G_i$'s have positive volume, and let
$F:=\cup_{[i]}(G_i+w_i)$, where the vectors $w_i\in\Z^N$ are chosen in such a way that the sets 
$(G_i+w_i)$ are pairwise disjoint. Then, by $\Z^N$ periodicity of $g$,  and by \eqref{psunion} we get  
\[
\cF(F)<\sum_{[i]\in\mathcal A} ( P_s(G_i+w_i)-\int_{G_i+w_i} g(x)dx) = \sum_{[i]\in\mathcal A}\cF(G_i) \le \inf_{|E|=\frac 12}\cF(E)\,,
\]
thus leading to a contradiction. It follows that there exists $\bar i$ such that $|G_{\bar i}|=1/2$,
so that $G_{\bar i}$ is a (bounded) minimizer of the functional $\cF$.
\end{proof}

\subsection{Coercive case} 
We now assume that $g$ is coercive.  
\begin{proposition}\label{teocoe} Assume that $g$ is a measurable function, bounded from above, and coercive. 
Then for every $m>0$ there exists a minimizer to \eqref{iso}. 
\end{proposition}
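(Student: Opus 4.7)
The plan is the direct method of the calculus of variations, with coercivity of $g$ producing the tightness that makes the concentration-compactness argument of Theorem \ref{teoex} unnecessary. I first set up a minimizing sequence with uniformly bounded fractional perimeter: a ball $B$ of volume $m$ is admissible and, since $g$ is bounded from above, $\cF(B)<+\infty$, while the fractional isoperimetric inequality \eqref{isoperim} combined with $-\int_E g\ge -m\sup g$ gives $\inf\cF>-\infty$. For any minimizing sequence $E_n$ with $|E_n|=m$ and $\cF(E_n)\to\inf\cF$, the identity
\[
P_s(E_n)=\cF(E_n)+\int_{E_n}g\le \cF(E_n)+m\sup g
\]
then forces $P_s(E_n)\le C$ uniformly.

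Next I prove tightness. Given $M>0$, coercivity of $g$ supplies $R_M>0$ with $g\le -M$ on $B_{R_M}^c$; splitting the potential term across $B_{R_M}$ and using $g\le\sup g$ inside,
\[
\int_{E_n}g\le m\sup g-M\,|E_n\setminus B_{R_M}|.
\]
Combined with the uniform lower bound $\int_{E_n}g=\cF(E_n)-P_s(E_n)\ge -C'$, this yields $|E_n\setminus B_{R_M}|\le (C'+m\sup g)/M$, so the sequence is tight.

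Uniform boundedness of $P_s(E_n)$ gives precompactness of $\{\chi_{E_n}\}$ in $L^1_{\rm loc}(\R^N)$ via the compact embedding of $H^{s/2}$ into $L^2$ on bounded domains, and tightness upgrades the convergence to $L^1(\R^N)$ along a subsequence. Hence $\chi_{E_n}\to\chi_E$ in $L^1(\R^N)$ and a.e., with $|E|=m$. Lower semicontinuity of $P_s$ gives $P_s(E)\le\liminf P_s(E_n)$. For the potential term, I apply Fatou's lemma to the nonnegative integrands $(\sup g-g)\chi_{E_n}\to (\sup g-g)\chi_E$ a.e.\ and use $|E_n|=|E|=m$ to rewrite the resulting inequality as $-\int_E g\le \liminf_n(-\int_{E_n}g)$. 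Adding the two liminf inequalities,
\[
\cF(E)\le\liminf_n\cF(E_n)=\inf_{|F|=m}\cF(F),
\]
so $E$ is a minimizer.

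The main obstacle is the tightness step: without coercivity the minimizing sequence could lose mass at infinity (as indeed happens for the unconstrained functional \eqref{unc}), and coercivity of $g$ is precisely what prevents this, since placing mass where $g$ is very negative is very expensive. Everything else (fractional isoperimetric inequality, $L^1_{\rm loc}$ compactness under bounded fractional perimeter, lower semicontinuity of $P_s$, Fatou's lemma) is standard.
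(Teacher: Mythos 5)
Your proof is correct and follows essentially the same route as the paper: uniform bound on $P_s(E_n)$ from boundedness of $g$ above, tightness from coercivity, $L^1$ compactness via the $H^{s/2}$ embedding, and lower semicontinuity to conclude. The only difference is that you spell out the lower semicontinuity of the potential term via Fatou's lemma, which the paper leaves implicit (after normalizing $g\le 0$), so your write-up is if anything slightly more complete.
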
 
\begin{proof} The argument is the same as for local perimeter functionals (see \cite[Lemma 6]{fm}). 
First of all observe that, up to adding a constant, we can assume that $g\leq 0$.
 
Let $E_n$ be a  minimizing sequence, then
\begin{equation}\label{c1} P_s(E_n)\leq P_s(E_n)-\int_{E_n}g(x)dx \leq C.\end{equation} 
For $R>0$,  we compute
\[ -(\sup_{\R^N\setminus B_R} g)|E_n\setminus B_R| \leq -\int_{E_n\setminus B_R} g(x)dx
\leq P_s(E_n)-\int_{E_n} g(x)dx\leq C.\]
Since by assumption $\sup_{\R^N\setminus B_R} g\to -\infty$ as $R\to +\infty$, this implies that  \begin{equation}\label{c2}\sup_n|E_n\setminus B_R|\to 0\qquad \text{as $R\to +\infty$}.\end{equation}
By \eqref{c1}, \eqref{c2} and the compact embedding of $H^{s/2}$ into $L^1$, there exists a set $E$ with $|E|=m$
such that, up to a subsequence, $E_n\to E$ in $L^1$. 
By the lower  semicontinuity of $P_s$ wth respect to the $L^1$ convergence, it follows that $E$ is a minimizer of \eqref{iso}.
\end{proof}

\end{document}